\newtheorem*{theorem*}{Theorem}
\newtheorem{theorem}{Theorem}
\newtheorem{lemma}{Lemma}
\newtheorem{corollary}{Corollary}
\newtheorem{definition}{Definition}
\theoremstyle{remark}
\newtheorem{example}{Example}
\newtheorem{remark}{Remark}
\numberwithin{equation}{section}
\numberwithin{assumption}{section}
\newcommand{\Z}{\mathbb{Z}}
\newcommand{\Q}{\mathbb{Q}}
\newcommand{\N}{\mathbb{N}}
\begin{document}
\title{Diophantine tuples and Integral Ideals of $\mathbb{Q}(\sqrt{d})$}
	
	\author{Kalyan Chakraborty}
	\email[Kalyan Chakraborty]{kalyan.c@srmap.edu.in, kalychak@gmail.com}
	\address{Department of Mathematics, SRM University AP
		Neerukonda, Mangalagiri, Guntur-522240
		Andhra Pradesh
		India.}
	
	\author{Shubham Gupta}
	\email[Shubham Gupta]{shubhamgupta2587@gmail.com}
	\address{Harish-Chandra Research Institute,  A CI of Homi Bhabha National
		Institute, Chhatnag Road, Jhunsi, Prayagraj - 211019, India.}
	
	\author{Krishnarjun Krishnamoorthy}
	\email[Krishnarjun Krishnamoorthy]{krishnarjunmaths@outlook.com}
	\address{Beijing Institute of Mathematical Sciences and Applications (BIMSA), No. 544, Hefangkou Village, Huaibei Town, Huairou District, Beijing.}
	
	\keywords{Diophantine tuples, Quadratic extensions}
	\subjclass[2020]{11D09, 11R11}

	\begin{abstract}
		Suppose $n$ is the fundamental discriminant associated with a quadratic extension of $\Q$. We show that for every Diophantine $m$-tuple $ \{t_1, t_2, \ldots, t_m\} $ with the property $ D(n) $, there exists integral ideals $ \mathfrak{t}_1, \mathfrak{t}_2, \ldots, \mathfrak{t}_m $ of $ \Q(\sqrt{n}) $ and $c\in \{1,2\}$ such that $ t_i= c\mathcal{N}(\mathfrak{t}_i) $ for $ i=1,2, \ldots, m $. Here, $ \mathcal{N}(\cdot) $ denotes the norm map from $\Q(\sqrt{n})$ to $\Q$. Moreover, we explicitly construct the above ideals for Diophantine pairs $\{a_1, a_2\}$ whenever $\gcd(a_1, a_2) = 1$.
	\end{abstract}
	
	\maketitle
	
	\section{Introduction}\label{Section "Introduction"}
	
	Solving Diophantine equations is an area of mathematics, particularly number theory with a long and rich history. The focus of this article is a particular family of simultaneous Diophantine equations parameterized by certain ``moduli''. More precisely, given an integer $ n $, we say that a set of $m$ distinct elements $ \{a_1, a_2,\ldots, a_m\} \subset \N$ is a Diophantine $ m $-tuple with the property $ D(n) $, or a $D(n)$-$m$-tuple, if for all $ 1\leq i < j \leq m $, $ a_ia_j+m $ is a perfect square in $\mathbb{Z}$. One of the first examples of such sets goes back to Fermat who gave the example $ \{1,3,8,120\} $ which is a $ D(1) $-quadruple. Recently, Diophantine tuples are being extensively studied (see for example \cite{ DujeCrelle,TogbeZiegler, BCC2022, CMF2024, VZ2021}) and they are connected to diverse areas of number theory. Readers may also see books (\cite{Dujebook, DUJE_1}) written by Dujella to get the more information about Diophantine $m$-tuples.
	
	The aim of this article is to connect Diophantine $ m $-tuples to the algebraic study of quadratic extensions of $ \Q $. Most of the analysis seems to be generalizable for other base fields, particularly number fields, but we do not pursue that here. Before we proceed to state our results, we first briefly describe the various techniques available today in literature for studying Diophantine $ m $-tuples.
	
	One of the fundamental questions in the theory of Diophantine tuples is the question of extension of triples to quadruples, quintuples etc. Similarly, one might ask for a bound on the largest possible $ m $ for which there exists a Diophantine $ m $-tuple with the property $ D(n) $ for a given $ n $. The starting point for the solution of these equations almost always involves the solution of a simultaneous system of generalized Pell's equations. One then proceeds to deduce bounds on the possible solutions using the theory of linear forms of logarithms and hopes that these bounds would lead to some information about the existence of Diophantine tuples as required (see \cite{DujeCrelle,TogbeZiegler, BCC2022}).
	
	Another conceptual way of dealing with this problem is to associate algebro-geometric objects to Diophantine tuples and ask for the existence of rational or even integral points on certain varieties. In particular, given a $ D(1) $-triple $ \{a,b,c\} $, one may associate an elliptic curve and obtain a correspondence between the integral points of that particular elliptic curve and the number of possible extensions of $ \{a,b,c\} $ to a $ D(1) $-quadruple (see \cite{FUJ2007, DK2017}). The advantage of this method is that there are some remarkable results regarding the existence of integral points on elliptic curves which one may then employ to deduce theorems about Diophantine tuples. The downside is that the theoretical results that exist are not effective and the bounds arising from their effective generalizations are too large to be of any practical use.
	
	 To the best of our knowledge, the connection between Diophantine tuples and the norm map has not been explored and we hope that this leads to new directions in the study of Diophantine tuples. The fundamental lemma is Lemma \ref{Lemma "Diophantine - quadratic"} which connects Diophantine pairs to certain special algebraic integers in a quadratic extension of $ \Q $. This immediately allows us to determine the divisibility properties of Diophantine pairs with respect to various primes. This is described in Lemma \ref{Lemma "Prime power divisibility"}. The statements and proofs of the above lemmas constitute \S \ref{Section "Preliminary lemmas"}. In order to state the main result of this paper, we make the following definition.
	
	\begin{definition}\label{Definition "Norm Tuple and Principal"}
		A Diophantine $m$-tuple with the property $ D(n) $ of the form $ \{\mathcal{N}(\mathfrak{a}_1), \ldots,  \\ \mathcal{N}(\mathfrak{a}_m)\} $, for integral ideals $ \mathfrak{a}_1, \ldots, \mathfrak{a}_m $ of $ \Q(\sqrt{n}) $ is called a norm tuple.   If all the integral ideals $ \mathfrak{a}_1, \ldots, \mathfrak{a}_m  $ are principal, then we shall call the tuple as a principal norm tuple.
	\end{definition}
	
	While considering a norm or principal norm Diophantine $m$-tuple with the property $D(n)$, it is essential to consider the particular value of $ n $. However, to not clutter the notation we shall not do so, as the choice of $ n $ will be clear from the context.
	
    \subsection{Main Results}
    
    In this paper, we prove the following two theorems. The first theorem is related to the existence of an integral ideal of $\mathbb{Q}(\sqrt{n})$ for a Diophantine $m$-tuple with the property $D(n)$. More precisely, we have the following result:

	\begin{theorem}\label{Theorem "Every tuple is a norm tuple"}
		Suppose that $ n $ is a fundamental discriminant and $ T:=\{t_1, t_2,\ldots, t_m\} $ is a Diophantine $ m $-tuple with the property $ D(n) $. Then either $ T $ is a norm tuple, or $ T = 2T' $ where $ T' $ is a norm tuple with the property $ D(n/4) $. That is to say all the $ t_i $s are divisible by $ 2 $ and the tuple $ T':= \{t_1/2, t_2/2, \ldots, t_m/2\} $ is a norm tuple.
	\end{theorem}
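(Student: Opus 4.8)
The plan is to reduce the statement to a purely local question about the $p$-adic valuations of the $t_i$, using the standard description of ideal norms in a quadratic field. Writing $\mathcal{O}$ for the ring of integers of $\Q(\sqrt n)$, a positive integer $k$ is of the form $\mathcal{N}(\mathfrak{a})$ for an integral ideal $\mathfrak{a}$ if and only if every rational prime $p$ that is inert in $\Q(\sqrt n)$ divides $k$ to an even power; split and ramified primes impose no condition. This is immediate from unique factorization of ideals together with the fact that an inert prime has residue degree $2$ (so its prime ideal has norm $p^2$). Thus it suffices to control, for each inert prime $p$, the parities of $v_p(t_1), \ldots, v_p(t_m)$.

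First I would use Lemma \ref{Lemma "Diophantine - quadratic"}: for any pair $i \neq j$, writing $t_i t_j + n = s_{ij}^2$, the product $t_i t_j$ is the norm of the algebraic integer $s_{ij} + \sqrt n \in \mathcal{O}$, and in particular is an ideal norm. By the criterion above this forces $v_p(t_i) + v_p(t_j)$ to be even for every inert prime $p$, so $v_p(t_i) \equiv v_p(t_j) \pmod 2$ for all $i,j$, and hence the parity $\epsilon_p := v_p(t_i) \bmod 2$ is the same for every index $i$. The theorem will then split according to whether all these parities vanish.

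Next I would eliminate odd inert primes. Invoking Lemma \ref{Lemma "Prime power divisibility"} (or arguing directly: if an odd prime $p$ with $\left(\tfrac{n}{p}\right) = -1$ divided some $t_i$, then any partner $t_j$ gives $s_{ij}^2 \equiv n \pmod p$, exhibiting the nonzero nonresidue $n$ as a square modulo $p$, a contradiction), I conclude that $p \nmid t_i$ for every $i$, so $\epsilon_p = 0$ for all odd inert $p$. Consequently the only inert prime that can carry odd parity is $p = 2$, which occurs precisely when $2$ is inert, i.e. when $n \equiv 5 \pmod 8$.

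This isolates the single genuine obstacle, the prime $2$, and produces the dichotomy. If $\epsilon_2 = 0$ as well (in particular whenever $2$ splits or ramifies), then every inert prime divides every $t_i$ to an even power, each $t_i$ is an ideal norm, and $T$ is a norm tuple, with $c=1$. If instead $\epsilon_2 = 1$, then $2 \mid t_i$ with $v_2(t_i)$ odd for all $i$; setting $t_i = 2u_i$, the valuation $v_2(u_i) = v_2(t_i) - 1$ becomes even while all odd inert primes still miss $u_i$, so each $u_i$ is an ideal norm. Since $t_i t_j$ has even $2$-valuation and $n$ is odd, $s_{ij}$ is odd, and dividing $t_i t_j + n = s_{ij}^2$ by $4$ yields $u_i u_j + \tfrac{n}{4} = (s_{ij}/2)^2$, realising $T' = \{u_1, \ldots, u_m\}$ as a norm tuple with the property $D(n/4)$, so that $c=2$ and $T = 2T'$. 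The delicate part throughout is the behaviour at $2$: one must verify that the common parity $\epsilon_2$ can be extracted as a single global factor (so that the same $c$ serves the whole tuple), and that dividing out this factor simultaneously restores the even-valuation condition at every inert prime and the Diophantine relation at the half-discriminant $n/4$.
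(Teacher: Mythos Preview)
Your argument is correct and mirrors the paper's proof: the parity-of-valuation observation for inert primes is the content of Lemma~\ref{Lemma "Norm Forms"}, and your direct quadratic-residue argument ruling out odd inert primes is exactly Lemma~\ref{Lemma "Every tuple is norm"}, after which only the prime $2$ can survive (Corollary~\ref{Corollary "Norm tuples fundamental discriminant"}). Two small remarks: Lemma~\ref{Lemma "Prime power divisibility"} by itself only yields $p^2\mid t_it_j$, not $p\nmid t_i$, so it is your parenthetical direct argument that actually carries that step; and your own observation that $s_{ij}$ is odd means $s_{ij}/2\notin\Z$, so the phrase ``property $D(n/4)$'' in the second alternative must be read loosely --- this imprecision is already present in the theorem's statement and does not affect the substantive claim that each $t_i/2$ is an ideal norm.
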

	
	The proof of Theorem \ref{Theorem "Every tuple is a norm tuple"} is broken down into parts and is given in \S \ref{Section "Norm Tupls and Principal norm tuples"}. With some effort minor generalizations of Theorem \ref{Theorem "Every tuple is a norm tuple"} maybe possible by relaxing the requirement that $ n $ be the fundamental discriminant  and in particular we may allow for non-square-free values of $ n $. 
	
    The proof of the above theorem is existential and it would be very convenient to be able to explicitly construct the ideals. Under some additional restrictions, we explicitly construct these ideals in the following theorem. Given two elements $\alpha, \beta \in \mathbb{Q}(\sqrt{n})$, we denote the integral ideal generated by them as $\langle \alpha, \beta\rangle$.

    \begin{theorem}\label{Theorem "ideal"}
    	Suppose that $n$ is a square free rational integer and $x \in \mathbb{Z}$. Let $a_1, a_2$ be two co-prime natural numbers such that $a_1a_2 + n = x^2$ for some $x\in \Z$. Then for $i = 1, 2$, we may choose  $\mathfrak{a}_i = \langle a_i, x + \sqrt{n}\rangle$ in Theorem \ref{Theorem "Every tuple is a norm tuple"}, i.e. $\mathcal{N}(\mathfrak{a}_i) = a_i$ for $i=1,2$. Furthermore $\mathfrak{a}_1 \mathfrak{a}_2$ is a principal integral ideal. 
    \end{theorem}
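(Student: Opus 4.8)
The plan is to exploit two elementary facts throughout: the coprimality hypothesis $\gcd(a_1,a_2)=1$, and the factorization $a_1a_2 = x^2 - n = \mathcal{N}(x+\sqrt{n})$ coming from the defining relation. Write $\mathcal{O}$ for the ring of integers of $\mathbb{Q}(\sqrt{n})$. Since $a_i$ and $x+\sqrt{n}$ are algebraic integers, each $\mathfrak{a}_i = \langle a_i, x+\sqrt{n}\rangle$ is an honest integral ideal of $\mathcal{O}$, so the two assertions to prove are that $\mathcal{N}(\mathfrak{a}_i)=a_i$ and that $\mathfrak{a}_1\mathfrak{a}_2$ is principal.

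For the norm, I would treat $i=1$ (the case $i=2$ being identical) by multiplying $\mathfrak{a}_1$ by its Galois conjugate $\overline{\mathfrak{a}_1}=\langle a_1, x-\sqrt{n}\rangle$. Expanding the product and using $(x+\sqrt{n})(x-\sqrt{n})=a_1a_2$ gives $\mathfrak{a}_1\overline{\mathfrak{a}_1} = a_1\langle a_1,\, x-\sqrt{n},\, x+\sqrt{n},\, a_2\rangle$. The point is that the residual ideal is all of $\mathcal{O}$: it contains both $a_1$ and $a_2$, and since these are coprime a B\'ezout combination yields $1$. Hence $\mathfrak{a}_1\overline{\mathfrak{a}_1}=\langle a_1\rangle$, and taking ideal norms together with $\mathcal{N}(\mathfrak{a}_1)=\mathcal{N}(\overline{\mathfrak{a}_1})$ and $\mathcal{N}(\langle a_1\rangle)=a_1^2$ forces $\mathcal{N}(\mathfrak{a}_1)=a_1$. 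It is precisely the coprimality that collapses the residual ideal to the unit ideal; without it the norm could a priori be a proper divisor of $a_1$.

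For principality I would show directly that $\mathfrak{a}_1\mathfrak{a}_2 = \langle x+\sqrt{n}\rangle$. The inclusion $\supseteq$ follows because $a_1(x+\sqrt{n})$ and $a_2(x+\sqrt{n})$ both lie in $\mathfrak{a}_1\mathfrak{a}_2$, so a B\'ezout relation $ua_1+va_2=1$ produces $x+\sqrt{n}$ itself inside the product. The reverse inclusion is immediate once one checks that every generator of $\mathfrak{a}_1\mathfrak{a}_2$ — namely $a_1a_2$, $a_1(x+\sqrt{n})$, $a_2(x+\sqrt{n})$, and $(x+\sqrt{n})^2$ — is divisible by $x+\sqrt{n}$, using again $a_1a_2=(x+\sqrt{n})(x-\sqrt{n})$. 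I do not anticipate a serious obstacle here; the only points requiring care are keeping track of whether one works in $\mathbb{Z}[\sqrt{n}]$ or the full maximal order when $n\equiv 1\pmod 4$ (the argument is insensitive to this, since it uses only membership of the relevant algebraic integers in $\mathcal{O}$), and invoking the standard facts that conjugate ideals share a norm and that the ideal norm is multiplicative.
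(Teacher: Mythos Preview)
Your argument is correct. The principality step is handled exactly as in the paper: both you and the authors expand $\mathfrak{a}_1\mathfrak{a}_2$, observe that all four generators lie in $\langle x+\sqrt{n}\rangle$, and recover $x+\sqrt{n}$ itself from a B\'ezout relation between $a_1$ and $a_2$.

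Where you diverge is in the norm computation. The paper first observes $\mathcal{N}(\mathfrak{a}_i)\mid a_i^2$ and $\mathcal{N}(\mathfrak{a}_i)\mid a_1a_2$, whence $\mathcal{N}(\mathfrak{a}_i)\mid a_i$ by coprimality; it then \emph{reuses} the identity $\mathfrak{a}_1\mathfrak{a}_2=\langle x+\sqrt{n}\rangle$ to get $\mathcal{N}(\mathfrak{a}_1)\mathcal{N}(\mathfrak{a}_2)=a_1a_2$, forcing equality. You instead compute $\mathfrak{a}_1\overline{\mathfrak{a}_1}=a_1\langle a_1,x-\sqrt{n},x+\sqrt{n},a_2\rangle=\langle a_1\rangle$ directly, again via B\'ezout, and read off $\mathcal{N}(\mathfrak{a}_1)=a_1$. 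Your route decouples the norm statement from the principality statement and is arguably more self-contained; the paper's route is slightly more economical in that a single product computation does double duty. Both hinge on the same two ingredients---coprimality and the factorization $a_1a_2=(x+\sqrt{n})(x-\sqrt{n})$---so the difference is one of packaging rather than substance.
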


    \begin{remark}
        We highlight that the construction of the ideals mentioned above depends on the pair $\{a_1, a_2\}$ and not just the integer $a_1$. This is relevant because if we start with a triple $\{a_1, a_2, a_3\}$, the ideal asociated to $\{a_1,a_3\}$, say $\mathfrak{a}_1$ with $\mathcal{N}(\mathfrak{a}_1)=a_1$ may be different from the corresponding ideal $\mathfrak{a}_1'$ say associated with the pair $\{a_1,a_3\}$.
    \end{remark}

    \subsection*{Notations and conventions}
	\begin{enumerate}
		\item  	We shall denote the fundamental discriminant of $ \Q(\sqrt{n}) $ as $ D_n $. We denote the corresponding trace and norm maps as $\mathcal{T}(\cdot)$ and $\mathcal{N}(\cdot)$, respectively.
        \item 	Let $p$ be a prime number and $a \in \mathbb{N}$. The notation $\left(\dfrac{a}{p} \right)$ denotes the Legendre symbol.
	\end{enumerate}

    \subsection*{Acknowledgments}

    The authors thank Preda Mihailescu for useful discussions.

	\section{Preliminary Lemmas}\label{Section "Preliminary lemmas"}
	
	\begin{lemma}\label{Lemma "Diophantine - quadratic"}
		For rational integers $ t_1, t_2 $ and $ n $, $ t_1t_2 + n = r^2 $ if and only if there exists an algebraic integer $ \alpha $ of the form $ a + \sqrt{n} $ such that $ \mathcal{T}(\alpha) = -2r $ and $ \mathcal{N}(\alpha) = t_1t_2 $.
	\end{lemma}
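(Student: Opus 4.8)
The plan is to prove the biconditional by directly unwinding the definitions of trace and norm on $\Q(\sqrt n)$. An element of the prescribed form $\alpha = a + \sqrt{n}$, in which the coefficient of $\sqrt n$ is exactly $1$, has Galois conjugate $\bar\alpha = a - \sqrt n$, so $\mathcal{T}(\alpha) = \alpha + \bar\alpha = 2a$ and $\mathcal{N}(\alpha) = \alpha\bar\alpha = a^2 - n$. Equivalently, $\alpha$ and $\bar\alpha$ are the two roots of
$$X^2 - 2aX + (a^2 - n),$$
and I would read the trace and norm off this polynomial. The whole lemma then collapses to matching coefficients, so I expect no serious obstacle; the only points needing care are the dictionary $a \leftrightarrow r$ forced by the trace and the verification of the algebraic-integer condition.

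For the forward implication I would assume $t_1 t_2 + n = r^2$ with $r \in \Z$ and simply exhibit the witness $\alpha := -r + \sqrt n$, which is of the required form $a + \sqrt n$ with $a = -r$. Since $r, n \in \Z$, the monic polynomial $X^2 + 2rX + (r^2 - n)$ lies in $\Z[X]$ and annihilates $\alpha$, so $\alpha$ is an algebraic integer. A one-line computation then gives $\mathcal{T}(\alpha) = -2r$ and $\mathcal{N}(\alpha) = r^2 - n = t_1 t_2$, as required.

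For the converse I would start from an algebraic integer $\alpha = a + \sqrt n$ with $\mathcal{T}(\alpha) = -2r$ and $\mathcal{N}(\alpha) = t_1 t_2$. The trace relation $2a = -2r$ forces $a = -r$ (and in particular $a \in \Z$, since $r \in \Z$), and substituting into $\mathcal{N}(\alpha) = a^2 - n$ yields $t_1 t_2 = r^2 - n$, i.e.\ $t_1 t_2 + n = r^2$. This closes the equivalence. The one mild subtlety I would flag is that the computation tacitly treats $\Q(\sqrt n)$ as a genuine quadratic field; in the intended application $n$ is a fundamental discriminant, so $\sqrt n \notin \Q$ and trace and norm are the honest field-theoretic maps, but even the degenerate case reduces to the same scalar identity, so nothing is lost.
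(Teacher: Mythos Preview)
Your proof is correct and follows essentially the same approach as the paper: both directions amount to identifying $\alpha$ with a root of $X^2 + 2rX + t_1t_2$ and reading off trace and norm. Your version is simply more explicit about the witness $\alpha = -r + \sqrt{n}$ and the algebraic-integer verification.
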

	
	\begin{proof}
		If $ t_1t_2 + n = r^2 $, then we may define $ \alpha $ to be a root of the polynomial $ x^2 + 2rx + t_1t_2 $. Conversely, if such an $ \alpha $ exists, it follows that $ \mathcal{N}(\alpha) = t_1t_2 = r^2-n $. Rewriting this equality, we see that $ t_1t_2 + n = r^2 $ proving the lemma.
	\end{proof}
	
	\begin{lemma}\label{Lemma "Prime power divisibility"}
		With notation as in Lemma \ref{Lemma "Diophantine - quadratic"}, for any prime $ q $ dividing $ t_1t_2 $, $ q^{f_q} $ divides $ t_1t_2 $, where $ f_q $ is the residue degree of $ q $ in the extension $ \Q(\sqrt{n}) $.
	\end{lemma}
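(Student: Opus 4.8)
The plan is to transfer the divisibility question from the algebraic integer $\alpha = a + \sqrt{n}$ to the principal ideal it generates in the ring of integers $\mathcal{O}$ of $\mathbb{Q}(\sqrt{n})$, and then to exploit the multiplicativity of the ideal norm. Since $\alpha$ is an algebraic integer by Lemma \ref{Lemma "Diophantine - quadratic"}, the ideal $(\alpha)$ is a genuine integral ideal whose ideal norm is $|\mathcal{N}(\alpha)| = |t_1 t_2|$. In a quadratic field the residue degree $f_q$ of a rational prime $q$ is either $1$ or $2$, so the statement is only non-trivial when $f_q = 2$; when $f_q = 1$ the desired conclusion $q^{f_q} = q \mid t_1 t_2$ is just the hypothesis.

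For the case $f_q = 2$ the prime $q$ is inert, so $(q)$ is itself a prime ideal of $\mathcal{O}$ with norm $\mathcal{N}((q)) = q^2$, and it is the unique prime of $\mathcal{O}$ lying above $q$. I would then write the prime factorization $(\alpha) = \prod_{\mathfrak{p}} \mathfrak{p}^{\,v_{\mathfrak{p}}}$ and take norms to get $|t_1 t_2| = \prod_{\mathfrak{p}} \mathcal{N}(\mathfrak{p})^{\,v_{\mathfrak{p}}}$, where each $\mathcal{N}(\mathfrak{p})$ is a power of the rational prime lying below $\mathfrak{p}$. Hence every factor of $q$ on the right-hand side must come from a prime above $q$, that is, from $(q)$ itself. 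Therefore $(q) \mid (\alpha)$ with some exponent $e := v_{(q)}(\alpha) \geq 1$, and the $q$-adic valuation of $t_1 t_2$ equals $f_q \cdot e = 2e \geq 2$. This yields $q^2 = q^{f_q} \mid t_1 t_2$, as claimed.

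The only point requiring care is the bookkeeping that isolates the contribution of $q$ to the ideal norm; once one observes that $(q)$ is the sole prime above an inert $q$, the valuation of $t_1 t_2$ at $q$ is forced to be even and positive, hence at least $2$. I do not expect a serious obstacle here. As a sanity check and an alternative elementary route for odd $q \nmid n$, one can observe that $q \mid t_1 t_2 = a^2 - n$ forces $n$ to be a quadratic residue modulo $q$, which is incompatible with $q$ being inert; thus for such $q$ the inert case is in fact vacuous, consistent with the ideal-theoretic argument. I would nonetheless present the norm argument, since the case $q = 2$ (which can be inert and genuinely divide $t_1 t_2$, necessarily to even order) is handled uniformly alongside the ramified primes, and because this viewpoint matches the norm-map framework of the paper.
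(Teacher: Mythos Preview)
Your proposal is correct and follows essentially the same approach as the paper: both arguments use that $t_1t_2 = \mathcal{N}(\alpha)$ is the norm of an integral ideal, so any prime $\mathfrak{q}$ above $q$ dividing $(\alpha)$ forces $\mathcal{N}(\mathfrak{q}) = q^{f_q}$ to divide $t_1t_2$. The paper simply phrases this in one line without the $f_q = 1$ versus $f_q = 2$ case split, noting that some prime above $q$ must divide $(\alpha)$ and then taking norms.
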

	
	\begin{proof}
		From Lemma \ref{Lemma "Diophantine - quadratic"}, there exists an $ \alpha $ such that $ \mathcal{N}(\alpha) = t_1t_2 $. If $ q | t_1t_2 $, and if $ \mathfrak{q} $ is a prime ideal above $ q $, then either $ \mathfrak{q} | (\alpha) $ or $ \overline{\mathfrak{q}} | (\alpha) $. In both cases, $ \mathcal{N}(\mathfrak{q}) = q^{f_q} | \mathcal{N}(\alpha) = t_1t_2 $, whence the claim.
	\end{proof}
	
	\section{Proof of Theorem \ref{Theorem "Every tuple is a norm tuple"}.}\label{Section "Norm Tupls and Principal norm tuples"}
	
	\begin{lemma}\label{Lemma "Norm Forms"}
		Every Diophantine $m$-tuple with the property $D(n)$ is a norm tuple or a constant times a norm tuple. Moreover the constant maybe chosen to be outside the image of the norm map.
	\end{lemma}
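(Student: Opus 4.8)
The plan is to work prime by prime in the ring of integers $\mathcal{O}$ of $\Q(\sqrt{n})$ and to pin down the exact obstruction to a positive integer being a norm. First I would fix any two indices $i \neq j$ and apply Lemma \ref{Lemma "Diophantine - quadratic"} to the relation $t_i t_j + n = r_{ij}^2$ to produce an algebraic integer $\alpha_{ij} \in \mathcal{O}$ with $\mathcal{N}(\alpha_{ij}) = t_i t_j$. The key observation, which sharpens Lemma \ref{Lemma "Prime power divisibility"}, is that if a prime $q$ is \emph{inert} in $\Q(\sqrt{n})$ then the unique prime of $\mathcal{O}$ above $q$ is $q\mathcal{O}$, of residue degree $2$; consequently the exponent of $q$ in $\mathcal{N}(\beta)$ is even for every nonzero $\beta \in \mathcal{O}$. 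Applied to $\alpha_{ij}$, this shows that $v_q(t_i t_j)$ is even at every inert prime $q$, where $v_q$ denotes the $q$-adic valuation.

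From this I would extract the crucial uniformity statement. Since $v_q(t_i) + v_q(t_j) = v_q(t_i t_j)$ is even for \emph{all} pairs $i \neq j$, the residues $v_q(t_1), \ldots, v_q(t_m) \pmod 2$ must all coincide; denote this common parity by $\epsilon_q \in \{0,1\}$. Thus whether a member $t_i$ fails to be a norm at the prime $q$ is a property of the tuple rather than of the individual index, and this is exactly what will allow a single constant to be subtracted off simultaneously for every element of the tuple.

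Next I would record the elementary description of the image of the norm map: a positive integer $N$ equals $\mathcal{N}(\mathfrak{a})$ for some integral ideal $\mathfrak{a}$ if and only if $v_q(N)$ is even at every inert prime $q$. Indeed, at split and ramified primes the degree-one prime ideals realize any power of $q$ as a norm, whereas at inert primes only even powers of $q$ can occur; assembling the appropriate prime-ideal powers produces an ideal of norm $N$, and the converse direction is precisely the parity fact from the first paragraph.

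Finally I would set $S = \{\, q \text{ inert in } \Q(\sqrt{n}) : \epsilon_q = 1 \,\}$, which is finite since every $q \in S$ divides $t_1$, and put $c = \prod_{q \in S} q$. For each $i$ one has $c \mid t_i$, and $v_q(t_i/c)$ is even at every inert prime, so $t_i/c$ is a norm by the characterization; hence $T' = \{t_i/c\}$ is a norm tuple and $T = c\,T'$. If $S = \emptyset$ then $c = 1$ and $T$ is itself a norm tuple, while if $S \neq \emptyset$ then $c$ is a squarefree product of inert primes and the same characterization shows $c$ lies outside the image of the norm map, which gives the final clause. The step requiring the most care is the explicit assembly in the norm characterization, where each prime's splitting behaviour must be matched to an ideal of the correct norm; but the conceptual heart is the inert-prime parity, which simultaneously forces a single workable constant and guarantees that this constant escapes the norm image.
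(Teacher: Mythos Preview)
Your proposal is correct and follows essentially the same route as the paper: both arguments isolate the inert-prime parity obstruction via Lemma~\ref{Lemma "Diophantine - quadratic"} and take the constant to be the squarefree product of the inert primes appearing to odd exponent. Your write-up is in fact more complete than the paper's---you explicitly derive the uniformity of the parity $\epsilon_q$ across all $m$ indices (the paper argues only for pairs) and you spell out the characterization of ideal norms that the paper uses implicitly---but the underlying idea is identical.
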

	
	\begin{proof}
		Suppose that $ \{t_1, t_2\} $ is a $ D(n) $-pair which is not a norm pair. Then there is a prime $ q $ which is inert in $ \Q(\sqrt{n}) $ such that $ q^a $ properly divides $ t_1 $ for some odd integer $ a $. But from Lemma \ref{Lemma "Diophantine - quadratic"}, $t_1t_2$ is contained in the image of the norm map from $\Q(\sqrt{n})$ to $\Q$ and hence $ q | t_2 $. Thus if we define $ \kappa $ as the product of all the inert primes properly dividing $ t_1 $ with an odd exponent, then $ \kappa | t_2 $. Moreover, $ t_1/\kappa $ and $ t_2/\kappa $ are contained in the image of the norm map. Furthermore, $ \{t_1/\kappa, t_2/\kappa\} $ is a $ D(n/\kappa^2) $-pair. Whence the lemma.
	\end{proof}
	
	\begin{remark}
		When we rewrite a Diophantine $m$-tuple as a constant times a norm tuple, observe that the modulus becomes different. In fact, it is not even necessary that $ n $ is divisible by $ \kappa^2 $.
	\end{remark}
	
	The following corollaries are self-evident.
	
	\begin{corollary}\label{Corollary "Norm Tuple"}
    		A Diophantine $m$-tuple $\{a_1,\ldots, a_m\}$ with the property $D(n)$  having \\
            $\gcd (a_1,\ldots, a_m) = 1$ is a norm tuple.
	\end{corollary}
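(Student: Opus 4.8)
The plan is to read the corollary off directly from Lemma \ref{Lemma "Norm Forms"}, whose content is precisely a dichotomy: the tuple $T = \{a_1, \ldots, a_m\}$ is either a norm tuple, or it equals $c \cdot T'$ for some norm tuple $T'$ and a positive integer constant $c$ that can be taken outside the image of the norm map from $\Q(\sqrt{n})$ to $\Q$. My only task is therefore to exclude the second alternative under the coprimality hypothesis $\gcd(a_1, \ldots, a_m) = 1$.

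First I would make explicit what the second alternative forces. Writing $T' = \{\mathcal{N}(\mathfrak{a}_1'), \ldots, \mathcal{N}(\mathfrak{a}_m')\}$ for integral ideals $\mathfrak{a}_i'$ witnessing that $T'$ is a norm tuple, each entry satisfies $a_i = c\,\mathcal{N}(\mathfrak{a}_i')$. Since the $a_i$ are natural numbers and the ideal norms are positive integers, $c$ is a positive integer dividing every $a_i$. Consequently $c$ divides $\gcd(a_1, \ldots, a_m)$, which by hypothesis equals $1$, and so $c = 1$.

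Next I would invoke the clause in Lemma \ref{Lemma "Norm Forms"} permitting $c$ to be chosen outside the image of the norm map. This clashes with $c = 1$, because $1 = \mathcal{N}(1)$ lies in the image of the norm map. Hence the second alternative cannot occur, and $T$ is a norm tuple, as claimed.

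The argument is purely formal once Lemma \ref{Lemma "Norm Forms"} is available, so I do not expect any genuine obstacle here; the only point requiring a moment's care is the elementary observation that $1$ is always a norm, since it is exactly this fact that collapses the nontrivial scaling whenever the entries of the tuple are coprime.
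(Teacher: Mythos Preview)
Your argument is correct and is essentially the route the paper has in mind: the paper simply declares the corollary ``self-evident'' from Lemma~\ref{Lemma "Norm Forms"}, and your proof unpacks exactly that. One small simplification: once you have shown that the constant $c$ must equal $1$, you are already done, since then $T = T'$ is itself a norm tuple; the detour through the ``outside the image of the norm map'' clause to manufacture a contradiction is not needed.
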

	
	\begin{corollary}
		A Diophantine $m$-tuple which contains $ 1 $ is a norm tuple.
	\end{corollary}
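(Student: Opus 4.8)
The plan is to obtain this as an immediate consequence of Corollary \ref{Corollary "Norm Tuple"}, which asserts that any $D(n)$-$m$-tuple whose elements have greatest common divisor $1$ is a norm tuple. First I would observe that if the tuple $T = \{a_1, \ldots, a_m\}$ contains the element $1$, then trivially $\gcd(a_1, \ldots, a_m) = 1$, since $1$ divides every integer and no integer larger than $1$ divides $1$. Applying Corollary \ref{Corollary "Norm Tuple"} then yields that $T$ is a norm tuple, completing the argument.

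Alternatively, one can argue directly from Lemma \ref{Lemma "Norm Forms"}. By that lemma, $T$ is either a norm tuple or of the form $c \cdot T'$ for a norm tuple $T'$ and a constant $c$ lying outside the image of the norm map $\mathcal{N} \colon \Q(\sqrt{n}) \to \Q$. In the latter case, the presence of $1 \in T$ forces $1 = c \cdot t'$ for some positive integer $t' \in T'$, and since both $c$ and $t'$ are natural numbers this compels $c = 1$. But $1 = \mathcal{N}(\langle 1 \rangle)$ lies in the image of the norm map, contradicting the choice of $c$. Hence the second alternative is impossible and $T$ must be a norm tuple.

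In either formulation the statement is essentially a direct corollary, so there is no genuine obstacle to overcome; the only point requiring a moment's care is the observation that a common multiplicative factor (the constant $c$, equivalently the $\gcd$) is ruled out the instant the unit $1$ appears in the tuple, because $1$ admits no nontrivial divisors and is itself a norm.
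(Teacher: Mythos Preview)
Your proposal is correct and aligned with the paper, which simply declares both this corollary and Corollary~\ref{Corollary "Norm Tuple"} to be ``self-evident'' consequences of Lemma~\ref{Lemma "Norm Forms"} without writing out any argument. Your first formulation---deducing the result from Corollary~\ref{Corollary "Norm Tuple"} via the trivial observation that $1$ among the $a_i$ forces $\gcd(a_1,\ldots,a_m)=1$---is exactly the intended one-line justification, and your alternative direct appeal to Lemma~\ref{Lemma "Norm Forms"} is an equally valid unpacking of the same idea.
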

	
	
	\begin{lemma}\label{Lemma "Every tuple is norm"}
		If $ \{t_1, t_2\} $ is a $ D(n) $-pair and if $ p $ is an odd prime dividing $ t_1t_2 $ and co-prime to $ n $, then $ p $ is split in $ \Q(\sqrt{n}) $.
	\end{lemma}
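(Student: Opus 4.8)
The plan is to reduce the splitting behaviour of $p$ to a single Legendre symbol computation and then read that symbol off from the defining equation of the Diophantine pair. Since $\{t_1,t_2\}$ is a $D(n)$-pair, there is an integer $r$ with $t_1t_2 + n = r^2$; equivalently, by Lemma \ref{Lemma "Diophantine - quadratic"}, $t_1t_2 = r^2 - n$ lies in the image of the norm map. The first step I would take is to reduce this relation modulo $p$. Because $p \mid t_1t_2$, this immediately gives $n \equiv r^2 \pmod{p}$.

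Next I would note that $p \nmid r$: otherwise $p \mid r^2 \equiv n \pmod p$, contradicting the hypothesis that $p$ is coprime to $n$. Hence $r$ is invertible modulo $p$, and $n$ is congruent to the nonzero square $r^2$, so that $\left(\dfrac{n}{p}\right) = 1$. This step is the one that genuinely forbids the inert case, since it guarantees the residue of $n$ modulo $p$ is an \emph{honest} (nonzero) quadratic residue rather than $0$.

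Finally I would invoke the standard Dedekind splitting criterion for quadratic fields. Because $p$ is odd and coprime to $n$, it is coprime to the fundamental discriminant $D_n$ (which equals $n$ or $4n$), so $p$ is unramified in $\Q(\sqrt{n})$ and is therefore either split or inert. The prime $p$ splits precisely when $\left(\dfrac{D_n}{p}\right) = 1$, and since $D_n/n \in \{1,4\}$ with $\left(\dfrac{4}{p}\right) = 1$, this condition is equivalent to $\left(\dfrac{n}{p}\right) = 1$, which we have just established. Hence $p$ is split, as claimed.

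The argument is short and I do not anticipate a serious obstacle. The only points requiring care are the bookkeeping between $n$ and the discriminant $D_n$ and the exclusion of ramification, both of which are dispatched by the hypotheses that $p$ is odd and coprime to $n$; everything else follows from reducing $t_1t_2 + n = r^2$ modulo $p$.
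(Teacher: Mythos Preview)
Your proof is correct and follows essentially the same route as the paper: reduce $t_1t_2+n=r^2$ modulo $p$ to see that $n$ is a nonzero quadratic residue, pass from $\left(\frac{n}{p}\right)=1$ to $\left(\frac{D_n}{p}\right)=1$, and invoke the standard splitting criterion (the paper phrases this as a proof by contradiction, but the content is identical). One small quibble: the assertion $D_n/n\in\{1,4\}$ presumes $n$ is squarefree, which the lemma does not; in general $D_n$ and $n$ differ by a rational square whose numerator and denominator are coprime to $p$, which is all you need.
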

	
	\begin{proof}

	Suppose the contrary that $ p $ is inert in $ \Q(\sqrt{n}) $ and that $ p | t_1t_2 $.	Let us denote the discriminant of $ \Q(\sqrt{n}) $ as $ D_n $. Then from \cite[Theorem 1.30]{RAM2011}, we have
		\begin{equation}\label{eq1}
			\left(\frac{D_n}{p}\right) = -1.
		\end{equation} 
		
    Suppose that $t_1t_2 + n = r^2$ for some $r \in \mathbb{Z}$. In particular, as $ p | t_1t_2 $, we may view the above equation modulo $ p $ which gives us $ n \equiv r^2 \mod p $. Furthermore, as $ D_n = s^2 n $ for some rational number $ s $, it follows that $ D_n $ is a quadratic residue modulo $ p $. 
		In other words, 
		\begin{equation}\label{eq2}
		\left(\frac{D_n}{p}\right) = 1.
		\end{equation}
		From \eqref{eq1} and \eqref{eq2}, we get the contradiction. Hence $p$ is not inert in $\mathbb{Q}(\sqrt{n})$. 
        
        By hypotheses, $p$ is an odd prime having $\gcd(p, n) = 1$, so $p$ is not ramified in $\mathbb{Q}(\sqrt{n})$. Thus, $p$ is split in $\mathbb{Q}(\sqrt{n})$ whence the lemma.   
	\end{proof}
	
	\begin{remark}
		The reason that we require $ p $ to be co-prime to $ n $ is to weed out examples of the following form. Suppose that $ \{t_1,t_2\} $ is a $ D(n) $-pair and that $ p $ is a prime inert in $ \Q(\sqrt{n}) $. Clearly, then, $ \{pt_1,pt_2\} $ is a $ D(np^2) $-pair, but $ \Q(\sqrt{n}) = \Q(\sqrt{np^2}) $. Thus in this situation, we may trivially construct counter examples. The case of the prime $ 2 $ is somewhat special as the following example illustrates.
	\end{remark}
	
	\begin{example}
		It is worth mentioning that Lemma \ref{Lemma "Every tuple is norm"} is not valid when we consider the case of the prime $ 2 $. We observe that $ \{2,6\} $ is a $ D(-3) $-pair but as $ 2 $ is inert in $ \Q(\sqrt{-3}) $, neither $ 2 $ nor $ 6 $ is a norm of an integral ideal in $ \Q(\sqrt{-3}) $. Similarly we may easily verify that $ \{2,10\} $ and $ \{2,6,18\} $ are $ D(5) $-pair and $ D(13) $-triple,  respectively.
	\end{example}
	
	\begin{corollary}\label{Corollary "Norm tuples fundamental discriminant"}
		Suppose that $ n $ is a fundamental discriminant, then in Lemma \ref{Lemma "Norm Forms"}, we may choose $ \kappa $ to be either $ 1 $ or $ 2 $. The choice $ \kappa = 2 $ is possible only if $ n $ is odd and $ 2 $ is inert in $ \Q(\sqrt{n}) $.
	\end{corollary}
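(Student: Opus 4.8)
The plan is to unwind the definition of $\kappa$ supplied in the proof of Lemma~\ref{Lemma "Norm Forms"} and show that the only inert prime it can possibly involve is $2$. Recall that there $\kappa$ is the (squarefree) product of those primes $q$ that are inert in $\Q(\sqrt{n})$ and divide $t_1$ to an odd power. Every prime factor of $\kappa$ is therefore inert, and an inert prime is in particular unramified, so it cannot divide the discriminant $D_n$. Since $n$ is assumed to be a fundamental discriminant we have $D_n = n$, and hence every prime factor of $\kappa$ is coprime to $n$. This is the observation that lets us feed these primes into Lemma~\ref{Lemma "Every tuple is norm"}.

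Next I would dispose of the odd prime factors. Let $q$ be an odd prime dividing $\kappa$. Then $q$ is inert, $q \mid t_1$ (so in particular $q \mid t_1 t_2$), and by the previous paragraph $\gcd(q, n) = 1$. These are exactly the hypotheses of Lemma~\ref{Lemma "Every tuple is norm"}, which would then force $q$ to be split in $\Q(\sqrt{n})$ --- contradicting the inertness of $q$. Consequently $\kappa$ has no odd prime factor, and being squarefree it must equal $1$ or $2$.

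It remains to pin down when the value $2$ can occur. If $2 \mid \kappa$ then by construction $2$ is inert in $\Q(\sqrt{n})$; since an inert prime is unramified, $2 \nmid D_n = n$, so $n$ is odd. This yields the stated necessary condition that $\kappa = 2$ forces $n$ odd with $2$ inert. The one genuinely delicate point is that the prime $2$ must be treated by hand: Lemma~\ref{Lemma "Every tuple is norm"} explicitly excludes $p = 2$ (the pair $\{2,6\}$ being a $D(-3)$-pair shows the lemma really does fail there), so we cannot rule out the factor $2$, only characterize exactly when it appears. Everything else reduces to the bookkeeping identity $D_n = n$ for fundamental discriminants together with the elementary fact that ramified primes divide the discriminant.
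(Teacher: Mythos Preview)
Your argument is correct and follows essentially the same route as the paper: use $D_n=n$ to see that inert primes are coprime to $n$, then apply Lemma~\ref{Lemma "Every tuple is norm"} to eliminate every odd inert prime from $\kappa$, leaving only the possibilities $\kappa\in\{1,2\}$. Your write-up is in fact a bit more careful than the paper's, since you spell out explicitly why $2$ inert forces $n$ odd (via $2\nmid D_n=n$), a point the paper leaves implicit.
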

	
	\begin{proof}
		With notation as in the proof of Lemma \ref{Lemma "Norm Forms"}, $ \kappa $ is the product of all the inert primes that divide $ t_1$ and $t_2 $. As $ n $ is chosen to be a fundamental discriminant, none of the inert primes $ p $ divides $ n $. But from Lemma \ref{Lemma "Every tuple is norm"}, the only such prime, if it exists, is $ 2 $, and this choice is possible only if $ 2 $ is inert in $ \Q(\sqrt{n}) $. This completes the proof.
	\end{proof}
	
	\begin{proof}[Proof of Theorem \ref{Theorem "Every tuple is a norm tuple"}]
		Theorem \ref{Theorem "Every tuple is a norm tuple"} is now evident from Corollary \ref{Corollary "Norm tuples fundamental discriminant"}.
	\end{proof}
	
	\begin{corollary}
		If $ n $ is a fundamental discriminant such that $ 2 $ is not inert in $ \Q(\sqrt{n}) $, then every $ D(n) $-tuple is a norm tuple.
	\end{corollary}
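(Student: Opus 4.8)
The plan is to read the result off directly from the dichotomy established in Theorem \ref{Theorem "Every tuple is a norm tuple"} together with the characterization of the exceptional constant supplied by Corollary \ref{Corollary "Norm tuples fundamental discriminant"}. First I would apply Theorem \ref{Theorem "Every tuple is a norm tuple"} to the given $D(n)$-tuple $T$. This presents exactly two possibilities: either $T$ is already a norm tuple, in which case there is nothing further to prove, or else $T = 2T'$ where $T'$ is a norm tuple with the property $D(n/4)$. The entire task therefore reduces to excluding the second alternative under the stated hypothesis.

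The crux is to recognize that the factor of $2$ appearing in the second branch of Theorem \ref{Theorem "Every tuple is a norm tuple"} is precisely the constant $\kappa$ from Lemma \ref{Lemma "Norm Forms"}, specialized to the value $\kappa = 2$ as permitted by Corollary \ref{Corollary "Norm tuples fundamental discriminant"}. That corollary asserts that, when $n$ is a fundamental discriminant, $\kappa$ may be taken to be $1$ or $2$, and that the choice $\kappa = 2$ is possible \emph{only} when $2$ is inert in $\Q(\sqrt{n})$. Since by hypothesis $2$ is not inert in $\Q(\sqrt{n})$, the value $\kappa = 2$ is ruled out, forcing $\kappa = 1$. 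Consequently the second alternative of the theorem cannot occur, and $T$ is itself a norm tuple.

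I do not anticipate any genuine obstacle here, as the substantive work has been front-loaded into Lemma \ref{Lemma "Every tuple is norm"}, which shows that every odd prime co-prime to $n$ dividing the tuple must split, and into Corollary \ref{Corollary "Norm tuples fundamental discriminant"}, which isolates $2$ as the sole prime that could obstruct the norm-tuple property. The present statement is merely the clean packaging of that analysis in the case where this lone potentially-inert prime fails to be inert; the only point requiring care is to state the invocation of Corollary \ref{Corollary "Norm tuples fundamental discriminant"} cleanly, so that the reduction from the two-case conclusion of Theorem \ref{Theorem "Every tuple is a norm tuple"} to the single desired conclusion is transparent.
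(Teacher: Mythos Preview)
Your proposal is correct and matches the paper's approach: the paper states this corollary without proof, treating it as an immediate consequence of Theorem \ref{Theorem "Every tuple is a norm tuple"} together with Corollary \ref{Corollary "Norm tuples fundamental discriminant"}, which is exactly the route you outline.
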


    \section{Proof of Theorem \ref{Theorem "ideal"}}\label{ex_id}

	Let notation be as in the theorem. For $i = 1, 2$, let $\mathfrak{a}_i = \langle a_i,~ x + \sqrt{n}\rangle$. Clearly
	\[
        \mathcal{N}(\mathfrak{a}_i) ~|~ \mathcal{N}(a_i) = a_i^2,~~ \mathcal{N}(\mathfrak{a}_i) ~|~ \mathcal{N}(x + \sqrt{n}) = x^2-n = a_1a_2.
        \]
	From the co-primality of $a_1, a_2$, we get that
	\begin{equation}\label{NAi}
	    \mathcal{N}(\mathfrak{a}_i) | a_i.
	\end{equation}
	Furthermore,
	\[
    \mathfrak{a}_1\mathfrak{a}_2 = \langle a_1, x+ \sqrt{n}\rangle\langle a_2, x+\sqrt{n}\rangle = \langle a_1a_2, a_1(x + \sqrt{n}), a_2(x + \sqrt{n}), (x + \sqrt{n})^2\rangle \subset \langle x + \sqrt{n}\rangle.
    \]
	Since by hypothesis $\gcd(a_1, a_2) = 1$, there exist $y, z \in \mathbb{Z}$ such that $a_1x + a_2y = 1$. So, $x + \sqrt{n} \in \mathfrak{a}_1\mathfrak{a}_2$ giving us $\mathfrak{a}_1\mathfrak{a}_2 = \langle x + \sqrt{n}\rangle$.
	This gives us $\mathcal{N}(\mathfrak{a}_1\mathfrak{a}_2) = x^2 - n = a_1a_2$.
	Utilizing \eqref{NAi} along with the last equation, we conclude that $\mathcal{N}(\mathfrak{a}_i) = a_i$	for $i = 1, 2$ completing the proof.

	\section{Concluding Remarks}
	
	Theorem \ref{Theorem "Every tuple is a norm tuple"} associates Diophantine tuples with the image of the norm map from quadratic number fields. In order to consider higher degree extensions, we need to consider a different family of Diophantine equations which go beyond the classical definition of Diophantine $m$-tuples. They are as follows:  A set  $\{a_1,\ldots, a_m\}$ is called a Diophantine $m$-tuple with the property $D_k(n)$ if it has the property  $a_ia_j + n = x^k$, where $x \in \mathbb{Z}$ and $k \geqslant 2$.
	 Till now, there are very few results (see \cite{DKM2022} and \cite{YD2003}) and examples for Diophantine $m$-tuple with the property $D_k(n)$. 
	For example, $\{2,171,25326\}$ and $\{1352,9539880,9768370\}$ are Diophantine triple with the property $D_3(1)$ and $D_4(1)$, respectively. As far as our knowledge, no example of such triple with the property $D_k(1)$ is known for $k \geq 5.$

	\bibliographystyle{amsalpha} 
	
	\bibliography{bibliography1}

\end{document}